\setlist[enumerate]{label={\upshape{(\roman*)}}}
\newtheorem{theorem}{Theorem}
\newtheorem{lemma}[theorem]{Lemma}
\newtheorem{claim}[theorem]{Claim}
\theoremstyle{definition}
\newtheorem{conjecture}[theorem]{Conjecture}
\newtheorem{question}[theorem]{Question}
\theoremstyle{remark}
\def\leq{\leqslant}
\def\geq{\geqslant}
\def\le{\leqslant}
\def\ge{\geqslant}
\def\mathbb{\mathds}
\title[Ramsey numbers with prescribed rate of growth]{Ramsey numbers with prescribed rate of growth}
\author[M.~Pavez-Sign\'e]{Mat\'ias Pavez-Sign\'e}
\address[M.~Pavez-Sign\'e]{Mathematics Institute, Zeeman Building, University of Warwick, Coventry, CV4 7AL, United Kingdom.}
\email{matias.pavez-signe@warwick.ac.uk}
\author[S.~Piga]{Sim\'on Piga}
\address[S.~Piga]{School of Mathematics, University of Birmingham, Birmingham, B15 2TT, United
Kingdom.}
\email{s.piga@bham.ac.uk}
\author[N.~Sanhueza-Matamala]{Nicol\'as Sanhueza-Matamala}
\address[N.~Sanhueza-Matamala]{Departamento de Ingeniería Matemática, Facultad de Ciencias Físicas y Matemáticas, Universidad de Concepción, Chile.}
\email{nicolas@sanhueza.net}
\thanks{MPS was supported by the European Research Council grant 947978 under the European Union’s Horizon 2020 research and innovation programme. SP was supported by EPSRC grant~EP/V002279/1. NSM was partly supported by ANID-Chile through the FONDECYT Iniciación Nº11220269 grant.
There are no additional data beyond that contained within the main manuscript.
}
\begin{document}

\begin{abstract}
    Let $R(G)$ be the $2$-colour Ramsey number of a graph $G$.
    In this note, we prove that for any non-decreasing function $n \leq f(n) \leq R(K_n)$, there exists a sequence of connected graphs $(G_n)_{n\in\mathbb N}$, with $|V(G_n)| = n$ for all $n \geq 1$,  such that $R(G_n) = \Theta(f(n))$.
    In contrast, we also show that an analogous statement does not hold for hypergraphs of uniformity at least~$5$.
    
    We also use our techniques to answer in the affirmative a question posed by DeBiasio about the existence of sequences of graphs, but whose $2$-colour Ramsey number is linear whereas their $3$-colour Ramsey number has superlinear growth.
\end{abstract}

\maketitle

\section{Introduction}
For a graph $G$ and $r \geq 2$, the \emph{$r$-colour Ramsey number} $R_r(G)$ of $G$ is the smallest number $n$ such that every $r$-colouring of the edges of the complete graph $K_n$ contains a monochromatic copy of $G$, that is, a copy of $G$ with all its edges in the same colour.
For $r = 2$, we will simply write $R_2(G) = R(G)$ and refer to this as \emph{the} Ramsey number of $G$.
The most notorious open problem here is to determine the Ramsey number of cliques.
The classical bounds on $R(K_n)$ by Erd\H os~\cite{Erds1947} and Erd\H os and Szekeres~\cite{erdos1935combinatorial} imply that
$\sqrt{2}^n \le R(K_n)\le 4^n$, for~$n\geq 3$,
so $R(K_n)$ is exponential in $n$, but despite tremendous efforts its exact behaviour remains unknown (see~\cite{CGMS2023} for the most recent improvements).

In general, if an $n$-vertex graph $G$ has $m$ edges and no isolated vertices, then $ 2^{\Omega(m/n)} \leq R(G) \leq 2^{O(\sqrt{m})}$, where the lower bound follows from a probabilistic construction and the upper bound was shown by Sudakov~\cite{Sudakov2011}.
Given additional structure on $G$, there are many cases where we can even obtain $R(G) = O(n)$.
This holds, for instance, for graphs with bounded maximum degree~\cite{CHVATAL1983},
bounded arrangeability~\cite{ChenSchelp1993},
or bounded degeneracy~\cite{lee2017ramsey}. We recommend~\cite{conlon2015recent} for a survey in the area. 

As we have seen, the Ramsey number of an $n$-vertex graph can vary between linear and exponential in $n$. 
A natural question is thus to ask which values (between $n$ and~$R(K_n)$) can be attained as the Ramsey number of some $n$-vertex graph.
The aim of this note is to study this question, and, in particular, to determine which functions~$f:\mathbb N\to\mathbb N$, with $n\le f(n)\le R(K_n)$ for all $n\in\mathbb N$, are the rate of growth of the Ramsey numbers of some sequence of $n$-vertex graphs.

It is natural here to restrict our analysis to connected graphs.
Note that after adding~$n-r$ isolated vertices to an $r$-vertex graph~$H$, we obtain an~$n$-vertex graph~$H'$ satisfying~$R(H')=\max\{n, R(H) \}$.
This means that we can obtain values for the Ramsey numbers of $n$-vertex graphs which in essence correspond to the Ramsey numbers of $r$-vertex graphs; restricting to connected graphs rules out such constructions.
Our first result is that every function, between the appropriate bounds, can be attained as the rate of growth of some sequence of graphs, up to a multiplicative factor.

\begin{theorem}\label{thm:main}
There exists a positive constant $C$ such that for every function $f:\mathbb{N}\to\mathbb{N}$, with $n\le f(n)\le R(K_n)$, there exists a sequence of connected graphs $(G_n)_{n\in\mathbb{N}}$ such that for all $n \in \mathbb{N}$, $|V(G_n)|=n$ and $f(n)\le R(G_n)\le Cf(n).$
\end{theorem}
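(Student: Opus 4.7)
The plan is to build $G_n$ as a ``padded gadget'': $G_n := H \cup_v P_{n-|V(H)|+1}$, where $H$ is a small connected graph on $k \le n$ vertices chosen so that $R(H) = \Theta(f(n))$, and a pendant path of length $n-k$ is attached at a single vertex $v \in V(H)$. This graph is connected on $n$ vertices.

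The first step is to establish the two-sided bound
\[
R(G_n) \;=\; \Theta\!\bigl(\max\{\,R(H),\; (\chi(H)-1)(n-1)\,\}\bigr).
\]
The lower bound is immediate: $R(G_n) \ge R(H)$ by subgraph monotonicity, and $R(G_n) \ge (\chi(H)-1)(n-1)+1$ by the Chv\'atal--Harary inequality applied to the connected graph $G_n$ (whose chromatic number equals $\chi(H)$). The upper bound is a Ramsey-type argument: in any $2$-edge-colouring of $K_N$ with $N = C\max\{R(H),\,\chi(H)\cdot n\}$, one extracts a monochromatic $H$ inside a dense neighbourhood of a single colour, then extends a vertex of $H$ into a monochromatic pendant path in the same colour, using that the majority colour contains a large connected structure on the remaining vertices.

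The task then reduces to choosing $H_n$ so that $\max\{R(H_n),\,(\chi(H_n)-1)(n-1)\} = \Theta(f(n))$ with $|V(H_n)| \le n$. When $f(n) = O(n)$, take $H_n = K_2$ so that $G_n = P_n$ with $R(G_n) = \Theta(n)$. In the polynomial regime $f(n) \in [n, n^2]$, take $H_n = K_k$ with $k \approx f(n)/n$: the chromatic lower bound dominates and equals $\Theta(f(n))$, while $R(K_k) \le 4^k = 2^{O(\log n)} = O(f(n))$. In the exponential regime $f(n) \in [n^2,\, R(K_n)]$, the chromatic lower bound saturates at $\approx (n-1)^2$, so $R(H_n)$ itself must be $\Theta(f(n))$.

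The main obstacle appears in this exponential regime. Using $H_n = K_k$ alone only gives the discrete values $R(K_k)$, spaced by up to $R(K_{k+1})/R(K_k) \le 2k = O(\log f(n))$ (obtained by iterating the Erd\H os--Szekeres recursion), which is too coarse by a $\log$ factor. To close this gap, I would interpolate between $K_k$ and $K_{k+1}$ via the chain $K_k = H^{(0)} \subsetneq H^{(1)} \subsetneq \cdots \subsetneq H^{(k)} = K_{k+1}$, where $H^{(i)}$ is $K_k$ together with a new vertex joined to $i$ of its vertices. The Ramsey numbers $R(H^{(i)})$ form a weakly non-decreasing sequence from $R(K_k)$ to $R(K_{k+1})$; the delicate technical heart of the argument is to show that each consecutive ratio $R(H^{(i+1)})/R(H^{(i)})$ is bounded by an absolute constant, a ``per-edge stability'' statement along this specific chain. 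Granted this, for every target $f(n)$ one selects the appropriate $H^{(i)}$ as gadget and obtains $G_n$ with $R(G_n) = \Theta(f(n))$, thereby producing the required sequence of connected graphs.
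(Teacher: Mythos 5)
Your high-level architecture (a small gadget $H$ with a pendant path, lower bound from subgraph monotonicity plus Chv\'atal--Harary) matches the paper's, but there are two genuine gaps, and the second is the heart of the matter. First, the intermediate regime is mishandled: for $f(n)\in[n,n^2]$ you take $H=K_k$ with $k\approx f(n)/n$ and claim $R(K_k)\le 4^k=2^{O(\log n)}=O(f(n))$, but $k=f(n)/n$ is only $O(\log n)$ when $f(n)=O(n\log n)$. For, say, $f(n)=n^{3/2}$ you get $k\approx\sqrt{n}$ and $R(K_k)\ge 2^{\sqrt{n}/2}\gg f(n)$, so the gadget's own Ramsey number destroys the upper bound. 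This is unavoidable with cliques: to make the chromatic lower bound $\Theta(f(n))$ you need $\chi\approx f(n)/n$, which forces $R(K_\chi)\ge 2^{f(n)/2n}$. The paper escapes this by using $K_{t,t}$ as the gadget throughout the range $n\le f(n)\le 2^{n/8}$: its chromatic number is $2$ (so the Chv\'atal--Harary term stays at $\Theta(n)$), while $R(K_{t,t})$ ranges over $2^{\Theta(t)}$ and can be tuned to any target up to $2^{\Omega(n)}$.

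Second, the ``per-edge stability'' you defer is precisely the missing key lemma, not a technicality one can grant: without it neither your clique chain nor the choice of $t$ works. The paper supplies it via a result of Conlon, Fox and Sudakov on vertex deletion in dense graphs, which gives directly $R(K_{k+1})\le c_1R(K_k)$ and $R(K_{t+1,t+1})\le c_2R(K_{t,t})$ for absolute constants; this makes your interpolating chain $H^{(0)}\subsetneq\cdots\subsetneq H^{(k)}$ unnecessary (and note that since $R(H^{(i)})$ is monotone, bounding every consecutive ratio along the chain is essentially equivalent to bounding the endpoint ratio $R(K_{k+1})/R(K_k)$, so you have not reduced the difficulty). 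Finally, your upper bound $R(G_n)=O(\max\{R(H),\chi(H)\cdot n\})$ for a general gadget with a pendant path is only asserted in one sentence; the corresponding step is the technical bulk of the paper's proof and uses the specific structure of the gadget (a long monochromatic path via Gerencs\'er--Gy\'arf\'as, a greedy tiling by blue copies of the gadget inside it, and a clique-path argument to string them together). As written, the proposal does not constitute a proof.
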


In other words, Theorem~\ref{thm:main} states\footnote{An alternative way to phrase Theorem~\ref{thm:main} is that there exists an absolute constant $C>0$ such that for all $n\in\mathbb N$ and $n\le a\le R(K_n)$, there exists a connected graph $G$ on $n$ vertices such that $a\le R(G)\le Ca$. } that $R(G_n)=\Theta(f(n))$, where the implicit constants do not depend on the function $f$. We remark that by a result of Burr and Erd\H os~\cite{BurrErdos1976} on the Ramsey number of trees, it is known that every $n$-vertex connected graph $G$ satisfies $R(G) \geq \lceil\tfrac{4}{3}n\rceil - 1$; thus taking the function $f(n) = \alpha n$ for any $1 \leq \alpha < 4/3$ shows that the conclusion of Theorem~\ref{thm:main} cannot hold with $R(G_n) = (1 + o(1))f(n)$ instead.

Our second result concerns $k$-uniform hypergraphs. A \emph{$k$-graph} $H$ is a pair~$H\!=\!(V,E)$ where $V$ is the set of vertices of $H$ and every edge $e\in E$ is a $k$-element subset of $V$. For $n\in\mathbb N$, the $k$-uniform clique on $n$ vertices $K_n^{(k)}$ is the $k$-graph on $n$ vertices in which every $k$-element set of vertices is an edge.
Given a $k$-graph~$H$,  the Ramsey number $R(H)$ of~$H$ is the smallest number $n$ such that every red-blue colouring of the edges of $K_n^{(k)}$ yields a monochromatic copy of $H$.  

We prove that an analogue of Theorem~\ref{thm:main} fails for $k$-graphs if $k\geq 5$ (even without any kind of connectivity restrictions).

\begin{theorem}\label{thm:hyp}
Let $k\ge 5$.
There exists a non-decreasing function~$f:\mathbb N\to\mathbb N$ with $n \leq f(n)\leq R(K_n^{(k)})$, such that for all $c, C \!>\!0$ and any $n_0$, there is an $n\!>\! n_0$ such that
\[R(H)\le cf(n)\quad\text{ or }\quad R(H)\ge Cf(n)\]
for every $n$-vertex $k$-graph $H$.
\end{theorem}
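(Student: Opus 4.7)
Our approach is a counting/pigeonhole argument. Let $S_n:=\{R(H):H \text{ is an $n$-vertex $k$-graph}\}\subseteq\{1,\dots,R(K_n^{(k)})\}$. Since every Ramsey number in $S_n$ is realised by a labelled $k$-graph on $n$ vertices, $|S_n|\le 2^{\binom{n}{k}}\le 2^{n^k}$. On the other hand, iterating the Erdős--Hajnal stepping-up lemma $k-3$ times from the classical bound $R(K_n^{(3)})\ge 2^{cn^2}$ yields, for every $k\ge 3$, a tower-type lower bound on $R(K_n^{(k)})$; for $k\ge 5$ this gives $\log_2 R(K_n^{(k)})\ge 2^{2^{c_k n^2}}$, a quantity vastly exceeding $|S_n|$. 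A telescoping-product pigeonhole therefore yields, for each $n$, consecutive values $a_n<b_n\in S_n$ with $b_n/a_n\ge R(K_n^{(k)})^{1/|S_n|}$; this forces $\log_2(b_n/a_n)\ge 2^{2^{c_k n^2}}/2^{n^k}\to\infty$, and the open interval $(a_n,b_n)$ contains no Ramsey number of any $n$-vertex $k$-graph.

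To build the function $f$, we take a very sparse increasing sequence $n_1<n_2<\cdots$ with $n_{i+1}>R(K_{n_i}^{(k)})$, and set
\[f(n):=\max\Bigl\{\, n,\ \max\bigl\{\lceil\sqrt{a_{n_j}b_{n_j}}\rceil\,:\,n_j\le n\bigr\}\,\Bigr\},\]
with the convention that an empty inner max is $0$. This $f$ is non-decreasing and satisfies $n\le f(n)\le R(K_n^{(k)})$ (the upper bound following from $\sqrt{a_{n_j}b_{n_j}}\le b_{n_j}\le R(K_{n_j}^{(k)})\le R(K_n^{(k)})$ for $n_j\le n$), and the sparsity of the sequence forces $f(n_i)=\lceil\sqrt{a_{n_i}b_{n_i}}\rceil\in(a_{n_i},b_{n_i})$. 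Given any $c<1<C$ and any $n_0\in\mathbb{N}$, we pick $i$ large enough that $n_i>n_0$ and $\sqrt{b_{n_i}/a_{n_i}}$ exceeds $\max(C,1/c)$ by a wide margin; then $(cf(n_i),Cf(n_i))\subset(a_{n_i},b_{n_i})$, so no $n_i$-vertex $k$-graph has Ramsey number in this range, delivering the required witness $n=n_i$.

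The crux of the argument, and the reason for the hypothesis $k\ge 5$, is the pigeonhole step: we need $\log R(K_n^{(k)})$ to be doubly-exponential in a polynomial in $n$, which requires at least two rounds of stepping-up on top of $R(K_n^{(3)})$, i.e.\ $k-3\ge 2$. For $k=4$, one only obtains $\log R(K_n^{(4)})\ge 2^{cn^2}$, which is dominated by $|S_n|\le 2^{n^4}$, and the pigeonhole yields no multiplicative gap growing with $n$. The remaining verifications---monotonicity of $f$, the bounds $n\le f(n)\le R(K_n^{(k)})$, and handling values of $n$ between consecutive witnesses $n_i$---follow routinely from the definition above together with the lower bounds $a_{n_i}\ge n_i$ and the spacing condition $n_{i+1}>R(K_{n_i}^{(k)})$.
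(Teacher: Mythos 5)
Your proof is correct and follows essentially the same strategy as the paper: both arguments exploit that for $k\ge 5$ the stepping-up lower bound makes $\log R(K_n^{(k)})$ vastly exceed the count $2^{n^k}$ of $n$-vertex $k$-graphs, pigeonhole a multiplicatively huge gap in the set of attainable Ramsey numbers, and place $f(n)$ at the geometric midpoint of that gap. The only (cosmetic) difference is the repair of monotonicity --- you restrict to a sparse sequence of witnesses $n_i$ with $n_{i+1}>R(K_{n_i}^{(k)})$, whereas the paper simply takes the running maximum of its function $g$ --- and both fixes work since the theorem only demands infinitely many witnesses $n$.
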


Using our techniques we can also answer a question posed by DeBiasio~\cite{DeBiasioMO}, who asked for the existence of a sequence $(G_n)_{n\in\mathds N}$ of graphs where $R_2(G_n)$ is linear whilst~$R_3(G_n)$ is superlinear.
Similar differences in behaviour depending on the number of colours have been observed before in infinite graphs (see \cite[Section 10.1]{CorstenDebiasioMckenney2020}) and in $3$-graphs (see~\cite{hedgehogs}).

\begin{theorem} \label{theorem:colourblind}
    There exists a sequence $(G_n)_{n \in \mathbb N}$ of graphs such that $|V(G_n)| = n$,  $R_2(G_n) = O(n)$ and $R_3(G_n) = \Omega(n \log n)$.
\end{theorem}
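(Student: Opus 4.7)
The plan is to take $G_n$ to be a disjoint union of many small cliques, in the spirit of the ``clique plus padding'' constructions used in the proof of Theorem~\ref{thm:main}. Concretely, set $a = \lfloor (\log_2 n)/2 \rfloor$ and $k = \lfloor n/a \rfloor$, and let $G_n$ consist of $k$ disjoint copies of $K_a$ together with $n - ka$ isolated vertices, so that $|V(G_n)| = n$. The upper bound $R_2(K_a) \le 4^a \le n$ implied by this choice of $a$ is what will enable the linear $2$-colour Ramsey bound, while the relatively large value $a = \Theta(\log n)$ will be used to extract a $\log n$ factor in the $3$-colour lower bound via a product-type construction.

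For the upper bound $R_2(G_n) = O(n)$, the key observation is a standard iterative extraction: given any $2$-edge-colouring of $K_N$ with $N \ge R_2(K_a) + (2k-1)a$, we successively pick disjoint monochromatic copies of $K_a$, each extraction being possible because at least $R_2(K_a)$ vertices remain. After $2k-1$ iterations, pigeonhole on the two colours yields at least $k$ of these copies in the same colour, giving a monochromatic $kK_a$, and hence a monochromatic $G_n$ (the $n - ka$ isolated vertices of $G_n$ can be embedded anywhere). Since $R_2(K_a) \le n$ and $(2k-1)a \le 2n$, this gives $R_2(G_n) \le 4n$.

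For the lower bound $R_3(G_n) = \Omega(n \log n)$, we use a stratified $3$-colouring: partition the vertex set of $K_N$ into $a-1$ groups, colour all edges between different groups with colour $3$, and choose $2$-colourings within each group (in colours $1$ and $2$) so that, globally across all groups, each of colours $1$ and $2$ contains fewer than $k$ pairwise disjoint copies of $K_a$. The colour-$3$ subgraph is then $(a-1)$-partite and so has clique number at most $a-1 < a$, hence contains no $K_a$ (let alone $kK_a$), while colours $1$ and $2$ contain no $kK_a$ by construction. Thus the entire $3$-colouring avoids a monochromatic $G_n$ and certifies $R_3(G_n) > N$, and the goal is to arrange $N = \Omega(n \log n)$.

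The main obstacle is the quantitative balance in designing the within-group $2$-colourings. One needs each group to be ``large enough'' (of size $\Omega(n)$, so that multiplying by the $\Theta(\log n)$ groups gives the target $n \log n$) yet ``restrictive enough'' (that the total number of monochromatic $K_a$-copies across groups stays below $k$). The naive estimate $R_2(mK_a) \ge ma$ for the group size, with $m \approx k/(a-1)$ copies of $K_a$ allowed per colour per group, is borderline and needs to be sharpened; this is where the interpolation-via-near-cliques techniques developed for Theorem~\ref{thm:main} enter, by refining the allowed substructure and controlling its Ramsey number with precision. The rest of the proof is a careful case analysis combining pigeonhole on groups with the product structure of the $3$-colouring.
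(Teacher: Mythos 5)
Your upper bound is fine, but the lower bound has a genuine gap that your proposed ``sharpening'' cannot repair, and it stems from the choice $a = \lfloor (\log_2 n)/2\rfloor$. That choice guarantees $R_2(K_a)\le 4^a\le n$, which is what the upper bound needs, but the stratified colouring needs the \emph{opposite} inequality $R_2(K_a)=\Omega(n)$, and with your $a$ all that is known is $R_2(K_a)\ge 2^{a/2}\approx n^{1/4}$. To see that this is fatal, note that every colour-$1$ or colour-$2$ copy of $K_a$ lies inside a single group, and in a group of size $s_i$ one can greedily extract at least $(s_i-R_2(K_a))/a$ pairwise disjoint monochromatic copies of $K_a$, each of which is entirely colour $1$ or entirely colour $2$. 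Summing over the $a-1$ groups, if neither colour $1$ nor colour $2$ contains $k$ pairwise disjoint copies of $K_a$, then
\[
2k \;>\; \sum_{i=1}^{a-1}\frac{s_i-R_2(K_a)}{a} \;\ge\; \frac{N}{a}-R_2(K_a),
\]
so $N<2ka+aR_2(K_a)\le 2n+aR_2(K_a)$. If $R_2(K_a)$ is in fact close to its known lower bound $2^{a/2}\approx n^{1/4}$ (which cannot be ruled out), this caps your construction at $N=O(n)$, and no refinement of the within-group colourings can beat this counting bound. So the lower bound as proposed cannot be completed with the current knowledge of $R(K_a)$.

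The fix is to let the Ramsey number choose the clique size rather than the other way around: take $t$ minimal with $R_2(K_t)\ge n$. Then Lemma~\ref{lemma:ramseyvertexdeletion} gives $R_2(K_t)\le c_1R_2(K_{t-1})<c_1n$, so $R_2(K_t)=\Theta(n)$, while $n\le R_2(K_t)\le 4^t$ gives $t\ge\tfrac12\log_2 n$. With this choice a single clique suffices: the paper takes $G_n=K_t$ plus $n-t$ isolated vertices, so $R_2(G_n)=\max\{n,R_2(K_t)\}=O(n)$, and Lemma~\ref{lemma:colourblind-lower} --- precisely your stratified colouring, but with $t-1$ groups of size exactly $R_2(K_t)-1$, each $2$-coloured with \emph{no} monochromatic $K_t$ at all --- immediately gives $R_3(G_n)\ge(t-1)(R_2(K_t)-1)=\Omega(n\log n)$. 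This also removes entirely the delicate bookkeeping of ``fewer than $k$ disjoint copies globally'' that the disjoint-union target $kK_a$ forces on you.
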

Let us point out here that the graphs we construct for Theorem~\ref{theorem:colourblind} have isolated vertices. However, if we insist on sequences of connected graphs, we can get the following.

\begin{theorem} \label{theorem:colourblind2}
    There is a sequence $(G_n)_{n \in \mathbb N}$ of connected graphs such that~$|V(G_n)| = n$, $R_2(G_n) = O(n \log n)$ and $R_3(G_n) = \Omega(n \log^2 n)$.
\end{theorem}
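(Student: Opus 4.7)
The plan is to modify the (generally disconnected) graphs given by Theorem~\ref{theorem:colourblind} into connected graphs by attaching a pendant tree, and to amplify the lower-bound colouring via a blow-up, absorbing a multiplicative $\log n$ factor in both Ramsey numbers.

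Set $m = \lceil n/\log n\rceil$ and let $H_m$ be the graph on $m$ vertices provided by Theorem~\ref{theorem:colourblind}, so that $R_2(H_m) = O(m) = O(n/\log n)$ and $R_3(H_m) = \Omega(m\log m) = \Omega(n)$. Since $H_m$ is expected to consist of a nontrivial core together with many isolated vertices, I would form $G_n$ by attaching a pendant tree $T$ on $n - m$ new vertices at a vertex of the main component of $H_m$, incorporating the isolated vertices of $H_m$ into $T$ as internal nodes or leaves. The resulting $G_n$ is connected, has exactly $n$ vertices, and contains $H_m$ as a subgraph.

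For the upper bound $R_2(G_n) = O(n \log n)$, in any 2-colouring of $K_N$ with $N = C_1 n \log n$, first apply Theorem~\ref{theorem:colourblind} to find a monochromatic copy of $H_m$ in a subset of $R_2(H_m) = O(n/\log n)$ vertices, and then extend this copy along the pendant tree $T$ in the remaining $\Omega(n \log n)$ vertices using standard monochromatic tree-embedding arguments (such as Chv\'atal's bound $R(T) \leq 2|V(T)|$ applied to the dominant-colour neighbourhood of the attachment vertex). For the lower bound $R_3(G_n) = \Omega(n \log^2 n)$, take the 3-colouring of $K_{N_0}$ with $N_0 = \Omega(n)$ from Theorem~\ref{theorem:colourblind} avoiding monochromatic $H_m$, and blow it up by a factor $t = \Theta(\log^2 n)$, using within each block a 3-colouring of $K_t$ that also avoids monochromatic $H_m$ (available since $t \ll R_3(H_m)$). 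Since $|V(H_m)| = m \gg t$ for large $n$, no monochromatic $H_m$ fits inside a single block; a projection argument then shows that any monochromatic $H_m$ in the blow-up yields a monochromatic $H_m$ in the base colouring, a contradiction. Because $G_n \supseteq H_m$, this blow-up of $K_{N_0 t}$ also avoids monochromatic $G_n$, yielding $R_3(G_n) \geq N_0 t = \Omega(n \log^2 n)$.

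The main technical obstacle is the blow-up projection argument in the lower bound: one must control how a monochromatic copy of $H_m$ can distribute itself across several blocks and rule out ``partially collapsed'' monochromatic embeddings, which requires exploiting specific structural features of the construction of $H_m$ from Theorem~\ref{theorem:colourblind} (such as rigidity or bounded independence number) or making a careful choice of the within-block colourings. A secondary concern is the upper bound: although attaching a pendant tree to $H_m$ heuristically only adds $O(n)$ to the Ramsey number, turning this into a rigorous bound requires either a Ramsey-goodness-style extension result for pendant trees or a direct greedy embedding inside the monochromatic neighbourhood of a suitably chosen attachment vertex in the copy of $H_m$.
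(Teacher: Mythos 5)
Your lower bound is where the argument breaks, and the obstacle you flag is not a technicality but a fatal one. The graph $H_m$ from Theorem~\ref{theorem:colourblind} is a clique $K_s$ with $s=\Theta(\log m)$ plus isolated vertices, so (on a vertex set of size at least $m$) avoiding a monochromatic $H_m$ is the same as avoiding a monochromatic $K_s$. Your blow-up uses all three colours both between blocks and within blocks, and the product of a $K_s$-free $3$-colouring with $K_s$-free $3$-colourings of the blocks does \emph{not} avoid monochromatic $K_s$: a monochromatic clique in colour $i$ may take up to $s-1$ vertices in each of up to $s-1$ blocks, so the product only rules out monochromatic cliques of size greater than $(s-1)^2$. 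Since $(s-1)^2\gg s$, the blown-up colouring will in general contain monochromatic copies of $K_s$, hence of $H_m$ and of $G_n$. No choice of within-block colourings fixes this; the only way to make a projection argument work with a connected target is to reserve a \emph{fresh} colour for the between-block edges, which is exactly Lemma~\ref{lemma:colourblind-lower} --- and that lemma, applied to your $G_n$ (whose connected core has chromatic number $\Theta(\log n)$ and $2$-colour Ramsey number $O(n)$), only yields $R_3(G_n)=\Omega(n\log n)$, one $\log$ factor short.

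The paper avoids this entirely by making the \emph{$2$-colour} Ramsey number large rather than trying to amplify a $3$-colouring: it takes $G_n$ to be the graph from Case 2 of the proof of Theorem~\ref{thm:main} with $f(n)=2n\log_2 n$ (a clique on $t=\Omega(\log n)$ vertices with a pendant path), so that $R_2(G_n)=\Theta(n\log n)$ and $\chi(G_n)=\Omega(\log n)$ simultaneously, and then a single application of Lemma~\ref{lemma:colourblind-lower} with $H=G_n$ gives $R_3(G_n)\geq(\chi(G_n)-1)(R_2(G_n)-1)=\Omega(n\log^2 n)$. Note the statement of Theorem~\ref{theorem:colourblind2} deliberately allows $R_2(G_n)=O(n\log n)$ rather than $O(n)$; your construction aims at $R_2(G_n)=O(n)$, which is more than is needed for the upper bound but leaves you unable to reach $\Omega(n\log^2 n)$ below. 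Your upper-bound sketch (find the core in few vertices, extend along the pendant tree) is essentially sound and mirrors the paper's Case 2 argument, but the theorem as stated cannot be recovered along the route you propose without a genuinely new amplification idea.
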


\section{Proof of Theorem~\ref{thm:main}}\label{section:mainproof}

Conlon, Fox and Sudakov~\cite[Lemma 5.5]{ConlonFoxSudakov2020} used a result by Erd\H os and Szemer\'edi~\cite{ErdosSzemeredi1972} to prove that the Ramsey number of a dense graph cannot decrease by much under the deletion of one vertex.
Recently, Wigderson~\cite{Wigderson2022} investigated this phenomenon in sparser graphs.
A graph on $n$ vertices has density $d$ if it has exactly $d\binom{n}{2}$ edges.

\begin{lemma}[\cite{ConlonFoxSudakov2020}]\label{lemma:CFS} There exists a function $g: [0,1] \rightarrow \mathbb{R}$ such that for every graph $H$ of density at least $d$ and any graph $H'$ obtained by deleting a single vertex from $H$, we have
$R(H) \leq g(d) R(H')$.
\end{lemma}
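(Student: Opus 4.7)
The natural plan is to attack the statement via dependent random choice (DRC). Set $N := \lceil C(\log(1/d)/d)\,R(H')\rceil$ for a sufficiently large constant $C$, fix any $2$-edge-colouring of $K_N$, write $v^{*}$ for the vertex deleted from $H$ to form $H'$, and put $s = \deg_H(v^{*})$. By averaging, one colour, say red, yields a graph $G$ of edge density at least $1/2$. The goal is to locate a monochromatic copy of $H$ in this colouring.

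The DRC step is to sample $t = \Theta(\log(1/d)/d)$ vertices $v_1,\dots,v_t$ uniformly at random in $G$, and examine their common red-neighbourhood $U = N_r(v_1)\cap\dots\cap N_r(v_t)$. A convexity (Jensen) argument gives $\mathbb{E}[|U|]\geq (1/2)^t N$, which by the choice of $N$ is at least $R(H')$. A further union-bound step removes from $U$ those ``bad'' $s$-subsets whose common red-neighbourhood is too small, leaving a refined set $U' \subseteq U$ of size at least $R(H')$ with the property that every $s$-subset of $U'$ has many common red-neighbours. Applying the definition of $R(H')$ inside $U'$ furnishes a monochromatic copy $H''$ of $H'$. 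If $H''$ is red, then a common red-neighbour of the image of $N_H(v^{*})$ in $H''$ — which exists by the previous step, and can be chosen outside $V(H'')$ since $R(H') \gg n$ — extends $H''$ to a red copy of $H$, and we are done.

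The main obstacle is the case where $H''$ is blue: the DRC step was tailored to the red graph, so a blue extension vertex is not supplied for free. The plan here is either to run the DRC argument symmetrically across the two colours (using whichever colour ends up hosting the embedded $H'$), or to iterate, at each stage exploiting the fact that if blue wins locally then the blue graph inside $U'$ still carries significant density. The $\log(1/d)/d$ factor in the bound is consistent with tuning the DRC parameter $t$ to be large enough that either colour-resolution can be handled while still keeping $|U'|\geq R(H')$; getting this trade-off right, and in particular calibrating the interaction between the density $d$ of $H$ and the size of the deleted neighbourhood $s$, is where I expect the main technical delicacy of the argument to lie.
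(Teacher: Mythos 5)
The paper never proves this lemma; it is imported as a black box from \cite{ConlonFoxSudakov2020}, so your sketch has to stand on its own, and as written it does not. The first problem is quantitative and occurs already inside the dependent random choice step. With $t=\Theta(\log(1/d)/d)$ you get $\expectation[|U|]\ge 2^{-t}N = d^{\Theta(1/d)}N$, which is vastly smaller than $R(H')$ for small $d$; for the expected size to survive at $R(H')$ you are forced down to $t=O(\log(1/d))$. But then the union bound over bad $s$-subsets, which costs roughly $\binom{N}{s}(n/N)^{t}$, cannot be made small unless $t\gtrsim s$, and $s=\deg_H(v^*)$ can be as large as $n-1$: the hypothesis bounds the density of $H$ from below but places no upper bound on the degree of the deleted vertex, so $s$ can be far larger than $\log(1/d)$. (Even using the probabilistic lower bound $R(H')\ge 2^{\Omega(dn)}$, which gives $\log(N/n)=\Omega(dn)$, the requirement is still $t=\Omega(s)$.) Since $R(H')$ must be invoked as a black box --- you cannot control where the copy of $H'$ lands inside $U'$ --- you genuinely need every $s$-subset of $U'$ to be extendable, and that is unattainable at these parameters. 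This is not a calibration issue to be fixed later; DRC with ``every $s$-subset has many common neighbours'' is the wrong tool when $s$ is comparable to $n$.

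The second problem is the one you flag yourself: the monochromatic $H''$ produced inside $U'$ may be blue while $U'$ was constructed as a common \emph{red} neighbourhood. Neither of your proposed fixes closes this. Making the DRC symmetric would require a single set that is simultaneously a rich common neighbourhood in both colours, which costs an additional factor of $2^{-t}$ and still inherits the $s$-subset obstruction; iterating loses a factor of roughly $2^{-t}$ per round, and nothing prevents the ``wrong'' colour from winning for up to $\Omega(n/\log(1/d))$ consecutive rounds, accumulating a loss of $2^{-\Omega(n)}$ rather than the claimed $d/\log(1/d)$. A correct argument has to exploit the density of $H$ more structurally than merely as the exponent in a DRC calculation; rather than pushing this route further, you should consult the short proof in \cite{ConlonFoxSudakov2020}.
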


In fact, in~\cite[Lemma 5.5]{ConlonFoxSudakov2020} it is claimed the statement is true with $g(d) = c\log(1/d)/d$, for an absolute constant~$c>0$.
However, such a function works for $d$ bounded away from $1$ only.
Their proof can be trivially changed to obtain Lemma~\ref{lemma:CFS}, which is all we need to show the following corollary.
In fact, \ref{it:cliques} was already noted in~\cite{ConlonFoxSudakov2020}.

\begin{lemma} \label{lemma:ramseyvertexdeletion}
    There exist constants $c_1, c_2 > 1$ such that for any $n \geq 1$,
    \begin{enumerate}
        \item $R(K_{n+1}) \leq c_1 R(K_n)$,\label{it:cliques}
        \item $R(K_{n+1, n+1}) \leq c_2 R(K_{n,n})$.
    \end{enumerate}
\end{lemma}

\newcommand{\constant}{c_2}
We also need the Ramsey number of a path $P_n$ with $n$ edges, determined by Gerencsér and Gy\'arfás~\cite{GerencserGyarfas1967}.

\begin{lemma} \label{lemma:ramseypath}
    For every $n \geq 1$, $R(P_n) = \lfloor (3n+1)/2 \rfloor$.
\end{lemma}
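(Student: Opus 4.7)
This is the classical theorem of Gerencsér and Gyárfás. I would prove the two bounds separately.

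\emph{Lower bound.} To show $R(P_n) \geq \lceil(3n+1)/2\rceil$, I would exhibit a 2-edge-colouring of $K_{M}$ with $M = \lceil(3n+1)/2\rceil - 1$ that contains no monochromatic $P_n$. Partition $V(K_{M}) = A \sqcup B$ with $|A| = n$ and $|B| = M - n$ (so $|B| \leq \lfloor(n-1)/2\rfloor$). Colour all edges inside $A$ and inside $B$ red, and all edges between $A$ and $B$ blue. The red graph is the vertex-disjoint union of cliques $K_{|A|} \cup K_{|B|}$, whose longest path has at most $\max(|A|,|B|) - 1 \leq n - 1$ edges. The blue graph is the complete bipartite graph $K_{|A|,|B|}$, whose longest path has at most $2\min(|A|,|B|) = 2|B| \leq n - 1$ edges. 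Thus no monochromatic $P_n$ is present.

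\emph{Upper bound.} For $R(P_n) \leq \lceil(3n+1)/2\rceil$, consider any 2-edge-colouring of $K_M$ with $M = \lceil(3n+1)/2\rceil$, and take a monochromatic path $P = v_1v_2\cdots v_{p+1}$ of maximum length, WLOG red, with $p$ edges and endpoints $u = v_1$, $v = v_{p+1}$. If $p \geq n$ we are done; otherwise $p \leq n-1$ and the set $W := V \setminus V(P)$ of vertices outside $P$ has size $|W| = M - (p+1) \geq \lfloor n/2 \rfloor$. By the maximality of $P$, every edge from $u$ or $v$ to a vertex of $W$ must be blue. Combining these blue ``stars'' at the two endpoints of $P$ with blue edges inside $W$ and with blue edges between $W$ and the interior of $P$, a case analysis produces a blue path with at least $n$ edges, completing the proof.

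\textbf{Main obstacle.} The technical heart of the argument is the upper bound: extracting a blue $P_n$ from the blue substructure forced by maximality of $P$. Following the classical approach, one analyses how interior vertices of $P$ connect in blue to $W$. If many interior vertices have blue neighbours in $W$, a long blue zig-zag path can be assembled between $W$ and the interior; if few do, then the red path $P$ can itself be rerouted through $W$ (using the blue/red alternation at the endpoints) to produce a strictly longer red path, contradicting the maximality of $P$. Matching the extremal construction exactly requires careful bookkeeping about parities and the precise number of interior vertices of $P$ with blue edges to $W$.
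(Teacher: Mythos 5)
The paper does not prove this lemma; it is imported from Gerencsér and Gyárfás, so your proposal has to stand on its own, and as written it does not. The main gap is in the upper bound. After observing that all edges from the endpoints $u,v$ of a longest red path to $W$ are blue, you defer everything to ``a case analysis produces a blue path with at least $n$ edges''. That case analysis \emph{is} the theorem: two blue stars centred at $u$ and $v$, with no information about the colours inside $W$ or between $W$ and the interior of $P$, do not obviously concatenate into a blue $P_n$, and the dichotomy you sketch in your ``main obstacle'' paragraph (many interior vertices with blue neighbours in $W$, versus rerouting the red path through $W$) is asserted for neither branch; in particular the rerouting branch needs red edges from $W$ into $P$, which the maximality of $P$ does not supply. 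The known proofs are organised differently and constitute genuine work; nothing in your outline substitutes for it.

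Separately, your lower bound fails for even $n$, and in failing it exposes an off-by-one in the statement itself. For $n$ even one has $M=\lceil(3n+1)/2\rceil-1=3n/2$ and hence $|B|=n/2$, not $|B|\le\lfloor(n-1)/2\rfloor$ as you claim; the blue $K_{n,\,n/2}$ then contains a path with $2|B|=n$ edges, i.e.\ a blue $P_n$. This cannot be repaired, because for a path with $n$ edges the true value is $R(P_n)=\lceil 3n/2\rceil=n+\lceil n/2\rceil$, which agrees with $\lceil(3n+1)/2\rceil$ for odd $n$ but is one smaller for even $n$; the correct extremal colouring takes $|A|=n$ and $|B|=\lceil n/2\rceil-1$. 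Since the paper only ever uses the lemma as an upper bound on $R(P_n)$ (to extract a monochromatic path on at least $2N/3$ vertices of $K_N$), nothing downstream is affected, but you should prove the correct equality $R(P_n)=\lceil 3n/2\rceil$ rather than the one stated.
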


We shall also use a lower bound on the Ramsey number of complete bipartite graphs, which  follows from a standard probabilistic construction.

\begin{lemma}\label{lemma:lowerbound}For $t \geq 1$,
	$R(K_{t, t}) \geq 2^{t/2}$.
\end{lemma}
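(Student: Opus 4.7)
The plan is to use a straightforward first-moment (probabilistic) argument. Set $n = \lceil 2^{t/2}\rceil - 1$ so that $n < 2^{t/2}$, and consider a uniformly random 2-colouring of the edges of $K_n$, where each edge is independently coloured red or blue with probability $1/2$. I want to show that with positive probability this colouring contains no monochromatic $K_{t,t}$, which will give $R(K_{t,t}) > n$ and hence $R(K_{t,t}) \ge 2^{t/2}$.

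To count monochromatic copies of $K_{t,t}$, I would parametrise by an unordered pair $\{A,B\}$ of disjoint $t$-subsets of $[n]$, since swapping the two sides of $K_{t,t}$ gives the same copy. The number of such pairs is $\tfrac{1}{2}\binom{n}{t}\binom{n-t}{t}$, each such copy has $t^2$ edges, and the probability that all of them receive the same colour is $2 \cdot 2^{-t^2}$. Hence the expected number of monochromatic copies of $K_{t,t}$ is
\[
\tfrac{1}{2}\binom{n}{t}\binom{n-t}{t}\cdot 2^{1-t^2} \;=\; \binom{n}{t}\binom{n-t}{t}\cdot 2^{-t^2}.
\]

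Using the crude bound $\binom{n}{t}\binom{n-t}{t} \le n^{2t}/(t!)^2 \le n^{2t}$, together with $n < 2^{t/2}$ and hence $n^{2t} < 2^{t^2}$, this expectation is strictly less than $1$. By the first moment method there exists a 2-colouring of $K_n$ with no monochromatic $K_{t,t}$, which yields the claimed bound $R(K_{t,t}) \ge 2^{t/2}$.

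There is no real obstacle here; the only point requiring a moment of care is the factor $\tfrac{1}{2}$ arising from the automorphism of $K_{t,t}$ that swaps its two parts, which one must not double-count. Everything else is routine estimation, and the bound $R(K_{t,t})\ge 2^{t/2}$ has considerable slack, so even cruder estimates on the binomial coefficients would suffice.
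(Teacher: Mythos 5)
Your proof is correct and is precisely the ``standard probabilistic construction'' the paper invokes without detail: a first-moment count of monochromatic copies of $K_{t,t}$ in a uniformly random $2$-colouring of $K_n$ with $n<2^{t/2}$. The estimates (including the automorphism factor $\tfrac12$, which in fact has enough slack via the $(t!)^2$ term to be optional) all check out.
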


Theorem~\ref{thm:main} will be a direct consequence of the following two results.

\begin{lemma}\label{lemma:biclique-path}
    Suppose $1 \leq t \leq n/2$ and let $H_{n,t}$ be the graph formed by taking a copy of $K_{t,t}$ and attaching to one of its vertices a path on $n-2t$ new vertices.
    Then $$R(H_{n,t}) \leq 3R(K_{t,t})/2+3n\,.$$
\end{lemma}

\begin{proof}
    Let $H = H_{n,t}$ and
    let $N = \lfloor (3R(K_{t,t})+6n)/2\rfloor$. 
    Consider an arbitrary red-blue edge-colouring of $K_N$; we shall show that it contains a monochromatic copy of $H$. For a contradiction, assume it does not.
    By Lemma~\ref{lemma:ramseypath}, there exists a monochromatic path $P'$ in $K_N$ of length at least $R(K_{t,t})+2n-1$, and we assume without loss of generality that $P'$ is red.
    Let $P \subseteq P'$ be obtained after removing $n-2t$ vertices at one extreme of the path $P'$.
    Thus $P$ has at least $R(K_{t,t})+2n - (n-2t) \geq R(K_{t,t})+n$ vertices.
    Let $S = V(P)$.
    
    If $S$ contains a red copy of $K_{t,t}$, then together with $P'$ we can find in $K_N$ a red path of length at least $n-t$ joined to one of its vertices, a contradiction.
    Since $|S| \geq R(K_{t,t})+n$, we can find a monochromatic copy of $K_{t,t}$ in $S$, which must be blue.
    In fact, we can greedily find vertex-disjoint blue copies of $K_{t,t}$ until fewer than $R(K_{t,t})$ vertices remain uncovered.
    Let $K^1, \dotsc, K^s$ be the copies that we have found.
    Note that these copies together cover more than $|S| - R(K_{t,t}) \geq n$ vertices.
    
    For all $1 \leq i \leq s$, let $A_i, B_i$ be the two classes of $K^i$.
    Given $1 \leq i < s$, note that not all edges between $B_i$ and $A_{i+1}$ can be red, as that would yield a red monochromatic copy of $K_{t,t}$ in $S$.
    Therefore, there are blue edges $e_1, \dotsc, e_{s-1}$ where each $e_i$ has one endpoint $b_i\in B_i$ and other endpoint $a_{i+1}\in A_{i+1}$.
    Let $a_1 \in A_1$ be arbitrary.
    For all~$1 \leq i < s$, take a blue path $P_i \subseteq K^i$ which spans $V(K^i)$ and has endpoints $a_i$ and $b_i$.
    Thus, the concatenation $P_1 + e_1 + \dotsb + P_{s-1} + e_{s-1}$, together with $K^s$, forms a blue copy of $H$, a final contradiction.
\end{proof}

\begin{lemma}\label{lemma:clique-path}
    Suppose $2 \leq t \leq n$ and let $J_{n,t}$ be the graph formed by taking a copy of~$K_{t}$ and attaching to one of its vertices a path on $n-t$ new vertices.
    Then $$R(J_{n,t}) \leq 3(R(K_t)+(t+1)n)/2\,.$$
\end{lemma}

\begin{proof}
    Let $J = J_{n,t}$ and
    let $N = \lfloor{3(R(K_t)+(t+1)n)/2}\rfloor$.
    Consider an arbitrary red-blue edge-colouring of $K_N$; we shall show that it contains a monochromatic copy of $J$.
    For a contradiction, assume it does not.
    By Lemma~\ref{lemma:ramseypath}, there exists a monochromatic path $P'$ in $K_N$ of length at least $R(K_t)+(t+1)n - 1$, and we assume without loss of generality that $P'$ is red.
    Let $P \subseteq P'$ be obtained after removing $n-t$ vertices at one extreme of the path $P'$.
    Thus $P$ has at least $R(K_t)+(t+1)n - (n-t) \geq R(K_t)+nt$ vertices.
    Let $S = V(P)$.
    
    If $S$ contains a red copy of $K_{t}$, then together with $P'$ we can find in $K_N$ a red path of length at least $n-t$ joined to one of its vertices, a contradiction.
    Since $|S| \geq R(K_t)+nt$, we can find a monochromatic copy of $K_{t}$ in $S$, which must be blue.
    In fact, we can greedily find vertex-disjoint blue copies of $K_{t}$ until fewer than $R(K_{t})$ vertices remain uncovered.
    Let $Q^1, \dotsc, Q^s$ be the copies that were found.
    These copies, together, cover at least $|S| - R(K_{t}) \geq nt$ vertices of $S$, and thus we have $s \geq n$.
    
    Define a \emph{clique-path} to be a sequence of vertex-disjoint blue cliques $Q^{1}, \dotsc, Q^l$ such that for each $1 \leq i < l$ there is a blue edge $e_i$ between $Q^{i}$ and $Q^{i+1}$, and the edges~$e_i$ are vertex-disjoint for all $1 \leq i < l$.
        
    \begin{claim}
        There is a set of at most $t-1$ clique-paths that together cover all cliques $Q^1, \dotsc, Q^s$ exactly once.
    \end{claim}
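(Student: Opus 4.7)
The plan is to argue by minimality. I would pick a collection of clique-paths $\mathcal{P}_1, \dotsc, \mathcal{P}_p$ covering each of $Q^1, \dotsc, Q^r$ exactly once, with $p$ as small as possible, and aim to show $p \leq t-1$ by exhibiting a red $K_p$ inside $S$. Since $S$ contains no red copy of $K_t$, this immediately forces $p \leq t-1$.

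To produce the red $K_p$, I associate to each path $\mathcal{P}_i$ a representative vertex $v_i$ in its first clique $F_i$, chosen as follows. If $\mathcal{P}_i$ has length at least~$2$, let $a_i \in F_i$ be the endpoint in $F_i$ of the first connecting blue edge of $\mathcal{P}_i$, and pick any $v_i \in F_i \setminus \{a_i\}$, which exists because $t \geq 2$. If $\mathcal{P}_i$ has length $1$, pick $v_i \in F_i$ arbitrarily.

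The heart of the argument is to verify that each edge $v_iv_j$ with $i \neq j$ must be red. If some such edge were blue, I would merge $\mathcal{P}_i$ and $\mathcal{P}_j$ into a single clique-path by reversing $\mathcal{P}_j$ (so $F_j$ lies at its end), inserting the blue edge $v_jv_i$, and appending $\mathcal{P}_i$. This strictly reduces the number of paths and would contradict minimality, provided the vertex-disjointness of connecting edges is preserved. That is the main obstacle, and also why the $v_i$'s are chosen so carefully: in the merged path, $F_i$ and $F_j$ become internal cliques, each using two connecting edges; the choices $v_i \neq a_i$ and $v_j \neq a_j$ guarantee that the two ports at $F_i$, respectively $F_j$, are distinct, while every other clique of $\mathcal{P}_i$ or $\mathcal{P}_j$ inherits its original ports unchanged.

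With this established, $\{v_1, \dotsc, v_p\}$ spans a red $K_p$ on distinct vertices (since the $F_i$'s are pairwise vertex-disjoint), contradicting the absence of red $K_t$ in $S$ as soon as $p \geq t$. The bulk of the work is the merge bookkeeping outlined above; the rest is routine.
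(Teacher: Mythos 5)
Your proof is correct and follows essentially the same argument as the paper: choose an extremal family of clique-paths, pick a representative vertex in an end-clique of each that avoids the connecting edges, and observe that a blue edge between two representatives would allow a merge contradicting extremality, while all-red edges would give a forbidden red clique. The only (cosmetic) difference is that you minimise the number of paths in a full cover, whereas the paper fixes $t-1$ paths maximising the number of cliques covered and adjoins an uncovered clique as a trivial path.
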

    
    \begin{proof}
        Suppose otherwise and let $P_1, \dotsc, P_{t-1}$ be $t-1$ clique-paths which use pairwise-disjoint sets of cliques and together use the maximum possible number of cliques.
        For each $1 \leq i \leq t-1$, let $Q^i$ be an ``end-clique'' of each $P_i$.
        Let $Q^t$ be any clique not covered by $\{P_1, \dotsc, P_{t-1}\}$, which exists by assumption.
        In each $Q^1, \dotsc, Q^{t}$, we select a vertex $q_i$ which is not in any of the inter-clique edges of the clique-paths (here we use $t \geq 2$).
        Since $S$ contains no red~$K_t$, there must be a blue edge between some pair $q_i q_j$.
        But then we can obtain a new clique-path which contains $Q^i$, $Q^j$ and the edge $q_i q_j$.
        Thus we have found a new family of $t-1$ clique-paths covering one more clique, a contradiction.
    \end{proof}
    
    Therefore, there is a clique-path which uses at least $s/(t-1) \geq s/t$ cliques.
    In such a clique-path, we can easily find a blue clique $K_t$ together with a blue path which together use at least $t\cdot (s/t) = s \geq n$ vertices, as required.
\end{proof}

Now we are ready to prove the main result of this section.

\begin{proof}[Proof of Theorem~\ref{thm:main}]
    It is easily seen that Theorem~\ref{thm:main} follows immediately from the following statement about `gaps' in Ramsey numbers:
    there exists $C > 0$ such that for all $n, a \in \mathbb N$ and $n \leq a  \leq R(K_n)$, there exists a connected $n$-vertex graph $G$ with $a \leq R(G) \leq Ca$.
    From now on, we prove this latter statement.

    By the form of the statement, we can assume that $n$ is sufficiently large so that the inequalities that need it are true.
    Let $c_1$ and $c_2$ be the constants from Lemma~\ref{lemma:ramseyvertexdeletion} such that $R(K_t)\le c_1R(K_{t-1})$ and $R(K_{t,t})\le c_2R(K_{t-1,t-1})$ for all $t \geq 2$, and let $C$ be a sufficiently large constant. 

We will split the proof into two cases, depending on how large $a$ is.
In fact, the two ranges we consider are not disjoint, but they are enough to cover all possibilities between $n$ and $R(K_n)$.\medskip

\noindent\emph{Case 1: $n\le a\le 2^{n/8}$.}
Let $t$ be the minimal number such that $R(K_{t,t}) > a$.
We note that by the choice of $t$, we have $R(K_{t-1, t-1}) \leq a < R(K_{t,t})$.
By Lemma~\ref{lemma:lowerbound}, we have $2^{(t-1)/2} \leq a$ and thus $t \leq 2 \log_2 a+1$.
Since $a \leq 2^{n/8}$, we have that $t \leq n/4 + 1$.
Since we assume $n$ to be large, we can assume $2t \leq n$.
    Let $G_n=H_{n,t}$ be the graph as in Lemma~\ref{lemma:biclique-path}. Since $K_{t,t}\subseteq G_n$, we have $R(G_n)\ge R(K_{t,t})>a$. For the upper bound, using  Lemmata~\ref{lemma:ramseyvertexdeletion} and~\ref{lemma:biclique-path} we deduce that
\[R(G_n)\le \frac{3}{2}R(K_{t,t})+3n\le \frac{3}{2}c_2a+3n\le Ca.\]

\noindent\emph{Case 2: $n^2\leq a \leq R(K_n)$.}
Take $t$ minimal subject to~$R(K_t) \geq a$.
Clearly, such $t$ always exists and is at most $n$. Thus we have $R(K_{t-1}) < a \leq R(K_t)$.
Moreover, since $R(K_r) \geq 2^{r/2}$ holds for all $r$, we know that $t \leq \min \{ n,  2 \log_2 a + 1 \}$. Let $G_n=J_{n,t}$ be as in Lemma~\ref{lemma:clique-path} and note that, since $K_t\subseteq G_n$, we have  $R(G_n)\ge R(K_t)>a$. For the upper bound, using Lemmata~\ref{lemma:clique-path} and~\ref{lemma:ramseyvertexdeletion} we have
\[\begin{array}{lll}R(G_n)\le \frac{3}{2}(R(K_t)+(t+1)n)&\le& \frac{3}{2}(c_1a+(t+1)n)\\
&\le& Ca,\end{array}\]
where the last inequality follows from $(t+1)n \leq n(2 \log_2 a + 2) \le 3n\log_2a\le 6n^2\le 6a$.
\end{proof}

\section{Proof of Theorem~\ref{thm:hyp}}

For $k$-graphs, the so-called `stepping-up lemma' of Erd\H os, Hajnal, and Rado~\cite{erdoshajnal} allows us to deduce a tower-type lower bound for the Ramsey number~$R(K_n^{(k)})$ for every~$k\geq 3$, namely
\begin{align}\label{eq:Ramseyhyp}
    a n^2 \leq \log^{(k-2)}_2(R(K_n^{(k)})), 
\end{align}
where $a>0$ is a constant depending only on $k$ and $\log^{(i)}_2(\cdot)$ denotes the $i$th iterated base-$2$ logarithm (see \cite{ConlonFoxSudakov2013} for more explicit bounds of this form).

\begin{proof}[Proof of Theorem~\ref{thm:hyp}]
Let $k\!\geq\! 5$. 
We find a function~$g\colon\mathbb N\to \mathbb N$, with~$n\! \leq \!g(n)\!\leq\! R(K_n^{(k)})$ as follows. 
For every $n\in\mathbb N$, let $I_n=[\log_2 n, \log_2 R(K_n^{(k)})]$ be an interval in $\mathds{R}$.
Note that, since $k \geq 5$, inequality \eqref{eq:Ramseyhyp} implies that
\[\log_2 R(K^{(k)}_n) - \log_2 n \geq 2^{2^{a n}}-\log_2 n.\]
Since the number of $k$-graphs on $n$ vertices is at most $2^{n^k}$, by averaging we find a sub-interval~$I_n'\subseteq I_n$ which does not contain $\log_2 R(H)$ for any $n$-vertex $k$-graph $H$, and such that $I'_n$ has length at least
\begin{align*}
  \frac{2^{2^{a n}}-\log_2 n}{2^{n^k}+1}> 2n+1,
\end{align*}
where we used that $n$ is sufficiently large.
By passing to a sub-interval we may assume that $I'_n \cap \mathbb N$ has exactly $2n+1$ elements.
Let $m_n\in I'_n$ be the middle point of~$I_n' \cap \mathbb N$.
Then, for large $n$ and every $n$-vertex $k$-graph $H$ we have 
\begin{align}\label{eq:addemptyinter}
\log_2 R(H) \le m_n - n
\qquad \text{or} \qquad
\log_2 R(H) \ge  m_n + n.
\end{align}
Let $g:\mathbb N\to \mathbb N$ be defined by $g(n) = 2^{m_n}$.
Since $m_n\in I_n$ we have $n\leq g(n)\leq R(K_n^{(k)})$. Then, from~\eqref{eq:addemptyinter} we deduce that for every $n$ and every $n$-vertex $k$-graph $H$, 
\[ R(H)\leq 2^{-n} g(n)
     \qquad \text{or} \qquad 
     R(H)\geq 2^n g(n)\,.\]
In particular, for every two positive constants $c, C\!>\!0$ and for every sufficiently large~$n$, we have~$R(H) < c g(n)$ or~$R(H) > C g(n)$, as required.
     
Note that we could finish the proof here if we were only interested in gaps for hypergraph Ramsey numbers. However, if we insist on having a non-decreasing function, we may define $f:\mathbb N\to \mathbb N$  by setting $f(1)=g(1)$ and, for $n\ge 2$,
\[
f(n) = 
\begin{cases}
g(n) \quad &\text{ if }g(n) \geq f(n-1),\\ 
f(n-1) \quad &\text{ if }g(n) < f(n-1)\,.
\end{cases}
\]
It is straightforward to check that~$f$ is non-decreasing and satisfies the desired conditions. 
\end{proof}

Notice that the proof of Theorem~\ref{thm:hyp} relies on the fact that~$\log_2 R(K_n^{k}) = \omega(2^{n^k})$ for every $k\geq 5$. 
Erd\H os, Hajnal, and Rado \cite{erdoshajnal} conjectured that~\eqref{eq:Ramseyhyp} can be improved to $a n \leq \log^{(k-1)}_2(R(K_n^{(k)}))$
for every $k\geq 3$, in which case our proof of Theorem~\ref{thm:hyp} works for $4$-uniform hypergraphs as well.
The situation for $3$-uniform hypergraphs is not clear, even if this conjecture were true.

\section{Proofs of Theorems~\ref{theorem:colourblind} and~\ref{theorem:colourblind2}}

We shall use the following simple lemma.
We remark that similar statements have been obtained before, e.g., by Lefmann~\cite{Lefmann1987}.
For any graph $G$, let $\chi(G)$ be its chromatic number.

\begin{lemma} \label{lemma:colourblind-lower}
    For every graph $G$ and connected $H \subseteq G$, we have 
    $$R_3(G) \geq (\chi(H) - 1)(R_2(H) - 1) + 1\,.$$
\end{lemma}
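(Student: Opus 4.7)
The plan is to adapt the classical Chvátal--Harary lower bound construction to three colours, exploiting the connectedness of $H$ to localise any monochromatic copy of $H$ into a single block. Set $r = R_2(H) - 1$ and $s = \chi(H) - 1$, and let $N = rs$. I will construct a $3$-colouring of $E(K_N)$ with no monochromatic copy of $G$, which gives $R_3(G) \geq N+1$, as required.

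Partition the vertex set of $K_N$ into $s$ blocks $V_1, \dotsc, V_s$, each of size $r$. Since $r = R_2(H) - 1$, on each $V_i$ there exists a red/blue colouring of the edges of $K_{r}$ with no monochromatic copy of $H$; fix such a colouring on each block independently. Colour every edge between different blocks with a third colour, \emph{green}. The green subgraph is then precisely the complete $s$-partite graph with parts $V_1,\dotsc,V_s$.

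To verify there is no monochromatic $G$, I split into cases by colour. Suppose first there is a red copy of $G$. Since $H \subseteq G$, this contains a red copy of $H$. As $H$ is connected and all edges across blocks are green, this red copy of $H$ must be contained in some single block $V_i$, contradicting our choice of the red/blue colouring on $V_i$. The same argument handles the blue case. For the green case, a green copy of $G$ again contains a green copy of $H$, which must lie inside the complete $s$-partite graph spanned by green edges; but any subgraph of a complete $s$-partite graph has chromatic number at most $s = \chi(H)-1 < \chi(H)$, contradicting the chromatic number of $H$.

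There is no real obstacle here; the only subtle point worth flagging explicitly is that $G$ itself need not be connected, so one cannot directly localise a monochromatic $G$ into one block. The argument instead passes through the connected subgraph $H$, which is precisely why the statement is formulated in terms of a connected $H \subseteq G$ rather than in terms of $G$ alone.
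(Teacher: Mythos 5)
Your proposal is correct and is essentially identical to the paper's argument: the same $(\chi(H)-1)\times(R_2(H)-1)$ block construction, the same use of connectedness to trap a red or blue copy of $H$ inside one block, and the same chromatic-number obstruction for the green colour class. No issues.
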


\begin{proof}
    Let $N = (\chi(H) - 1)(R_2(H) - 1)$.
    We construct a red-blue-green colouring of $K_N$ as follows: partition $V(K_N)$ into $\chi(H) - 1$ sets $V_1, \dotsc, V_{\chi(H) - 1}$ of size $R_2(H) - 1$ each.
    Inside each $V_i$ use colours red and blue in such a way that the colouring does not contain a red-blue copy of $H$; colour every other edge green.
    
    This colouring does not contain a monochromatic copy of $G$.
    Indeed, a hypothetical such copy cannot be red or blue, as otherwise  there must exist a red or blue copy of $H$.
    Since $H$ is connected, such a copy of $H$ must lie inside one of the sets $V_i$, but we have chosen the red-blue edges so that this does not happen.
    Also, there are no green copies of $G_n$, since the graph formed by the green edges is $(\chi(H)-1)$-partite but $\chi(G) \geq \chi(H)$.
    We conclude that $R_3(G) > N$.
\end{proof}

\begin{proof}[Proof of Theorem~\ref{theorem:colourblind}]
Given $n \geq 2$, let $t$ be the least integer such that $n \leq R_2(K_t)$. Note that $t \geq 2$.
By choice, we have $R_2(K_{t-1}) < n$ and, by Lemma~\ref{lemma:ramseyvertexdeletion}, we have~$R_2(K_t) \leq c_1 R_2(K_{t-1}) < c_1 n$.
Let $G_n$ be the graph obtained from $K_t$ by adding $n-t$ isolated vertices, then $|V(G_n)| = n$ and $R_2(G_n) = \max\{ n, R_2(K_t) \} = R_2(K_t) < c_1 n = O(n)$.
On the other hand, since $n \leq R_2(K_t) \leq 4^t$, we know that $t \geq \frac{1}{2} \log_2 n$ and therefore by Lemma~\ref{lemma:colourblind-lower} we have $R_3(G_n) = \Omega(n \log n)$.
\end{proof}

\begin{proof}[Proof of Theorem~\ref{theorem:colourblind2}] 
Let~$t$ be the least integer so that $R_2(K_t)\ge n\log_2n$ and let~$G_n = J_{n,t}$.
Applying Lemma~\ref{lemma:clique-path}, we obtain a constant~$C>0$ such that
$$R(G_n) \leq \frac{3}{2}(R(K_t) + (t+1)n) \leq C n \log_2n\,,$$
where we use the bound~$R(K_t)\leq 4^t$ to deduce that $t=\Theta(\log_2(n))$. 
Since~$\chi(G_n) = \Omega(\log n)$, then, by Lemma~\ref{lemma:colourblind-lower} and the definition of $t$, we have  
\[R_3(G_n) > (\chi(G_n)-1)(R_2(G_n)-1) = \Omega(n \log^2 n),\]
as required.
\end{proof}

\section{Concluding remarks} \label{section:gaps}

\subsection{Gaps}
For $n\in\mathbb N$, let us consider the sets
\begin{align*}
\mathcal R_n &= \{R(G) \colon \vert V(G)\vert = n\}, \\
\mathcal R_n^{\circ} &= \{R(G) \colon G \text{ does not contain isolated vertices and } \vert V(G)\vert = n\}, \quad \text{and}\\    
\mathcal R_n^{\mathfrak{c}} &= \{R(G) \colon G \text{ is connected and } \vert V(G)\vert = n\}.
\end{align*}
It is clear that~$\mathcal R_n^{\mathfrak{c}} \subseteq \mathcal R_n^{\circ} \subseteq \mathcal R_n \subseteq [n, R(K_n)]$. 
Observe that $n\in \mathcal R_n$ since $R(\overline{K_n}) = n$, where $\overline{K_n}$ corresponds to an independent set on $n$ vertices.
Furthermore, consider a disjoint union of two stars~$\Sigma_{a,b} = K_{1,a}\cup K_{1,b}$. 
A result due to Grossman \cite{grossman} implies that $R(\Sigma_{a,a-i}) = 3a-2i$ for $i\in \{0,1,2\}$.
Thus, by adding~$n-(2a-i+2)$ extra isolated vertices to $\Sigma_{a,a-i}$ and letting  the value of~$a$ vary from $\lfloor n/3\rfloor$ to~$\lfloor(n-2)/2\rfloor$, we deduce that $[n, 3\lfloor\tfrac{n}{2}\rfloor-3]\subseteq \mathcal R_n$ for large $n\in\mathds N$.
Other families of sparse graphs can also be used to show other inclusions of this kind.

As mentioned in the introduction, $R(G)\geq \lceil\tfrac{4}{3}n\rceil-1$ holds for every connected graph~$G$ on~$n$ vertices, and this bound is tight. 
In particular, it implies that
\[\mathcal R^{\mathfrak{c}}_n\subseteq \Big[\big\lceil\tfrac{4}{3}n\big\rceil-1, R(K_n)\Big].\]
In a similar fashion, Burr and Erd\H{o}s~\cite{BurrErdos1976} showed that~$R(G)> n + \log n - O(\log\log n)$ holds for every $G \in \mathcal R_n^{\circ}$, which  is almost tight as shown by Csákány and Komlós~\cite{smallest}.
It would be interesting to get a better understanding of the structures of $\mathcal R_n$, $\mathcal R_n^{\circ}$, and~$\mathcal R_n^{\mathfrak{c}}$. 

Given a constant $c\!>\!1$,  say that a number $a\!\in\![n, R(K_n)]$ is a \emph{$c$-gap for $\mathcal R_n^{\mathfrak{c}}$} if~$[a,ca]\cap \mathcal R_n^{\mathfrak{c}}=\emptyset$.
As mentioned inside its proof, it is easy to see that Theorem~\ref{thm:main} is equivalent to the existence of a constant~$c\geq 1$ for which~$\mathcal R_n^{\mathfrak{c}}$ has no $c$-gaps for every sufficiently large~$n$.

In this direction, a proper (but non-empty) subset of the authors of this paper believes that the answer to the following question should be affirmative.
\begin{question}\label{quest}
Does there exist an $n_0\in \mathds N$ such that for every $n\geq n_0$
$$\mathcal R_n = [n,R(K_n)] \quad \text{and} \quad \mathcal R_n^{\mathfrak{c}} = \big[\lceil\tfrac{4}{3}n\big\rceil-1, R(K_n)\big]\,\text{?}$$
\end{question}
Observe that the first equality would imply that for every function~$f\colon \mathds N\to \mathds N$ with~$n\leq f(n)\leq R(K_n)$ there is a sequence of graphs $(G_n)_{n\in\mathds N}$ such that $f(n)=R(G_n)$ for sufficiently large $n\in\mathds N$, and an analogous statement would hold for connected graphs if the second identity was true. However, in order to show the first equality, one would need to show that $R(K_n-e)\in\{R(K_n)-1,R(K_n)\}$ for sufficiently large $n\in\mathds N$, which is likely to be a quite hard problem.
A simpler question, which we still think is interesting is as follows:

\begin{question}
    Given $c > 1$, it is true that there are no $c$-gaps $a$ for $\mathcal R_n^{\mathfrak{c}}$ with $a \geq 4n/3$, for all sufficiently large $n$?
\end{question}

\subsection{Chromatic number and connectivity}
We observe that the proof of the first case of Theorem \ref{thm:main} can be modified by replacing the r\^ole of $K_{t,t}$ with a complete $k$-partite graph $K_{t,\dots, t}$.
In this way, we may ensure that the graphs in the sequence~$(G_n)_{n\in \mathds N}$ have large chromatic number for sufficiently large $n$. 

\begin{theorem}\label{thm:chroamtic} For every $k\!\geq\! 2$, there are positive constants $c$, $C$, and $n_0 \in \mathds N$ such~ that for every  function $f : \mathbb{N}\to\mathbb{N}$, with $n\le f(n)\le R(K_n)$, there~ is~ a sequence of connected graphs $(G_n)_{n\in\mathbb{N}}$ with $|V(G_n)| = n$ such that~$cf(n)\le R(G_n)\le Cf(n)$ for all $n\in\mathbb{N}$. 
Moreover,~$\chi(G_n)\geq k$ for every $n\geq n_0$.
\end{theorem}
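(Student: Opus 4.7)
The plan is to follow Case~1 of the proof of Theorem~\ref{thm:main} almost verbatim, replacing the bipartite graph $K_{t,t}$ by the balanced complete $k$-partite graph $K_{t,\dots,t}$, which has chromatic number exactly~$k$. Given $f$ in the range $n \leq f(n) \leq 2^{n/(4k)}$, I let $t$ be minimal with $R(K_{t,\dots,t}) > f(n)$ and take $G_n$ to consist of $K_{t,\dots,t}$ with a path on $n - kt$ new vertices attached at one of its vertices; then $G_n$ is connected, has $n$ vertices, and satisfies $\chi(G_n) \geq \chi(K_{t,\dots,t}) = k$.

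Three easy properties of $K_{t,\dots,t}$ are required. First, $R(K_{t,\dots,t}) \geq R(K_{t,t}) \geq 2^{t/2}$ by Lemma~\ref{lemma:lowerbound}, which forces $t = O(\log f(n))$, and hence $kt \leq n$ in the chosen range. Second, a vertex-deletion stability $R(K_{t+1,\dots,t+1}) \leq c_k R(K_{t,\dots,t})$, obtained by $k$ iterations of the Conlon--Fox--Sudakov lemma (since $K_{t,\dots,t}$ has density $(k-1)/k$, bounded away from $0$). Third, Hamilton-connectedness of $K_{t,\dots,t}$, which holds for all $k \geq 3$; for $k = 2$ one instead uses the bipartition-endpoint bookkeeping already present in the original proof.

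The upper bound $R(G_n) \leq C_k f(n)$ then follows along the same lines as in Theorem~\ref{thm:main}: given a red-blue colouring of $K_N$ with $N = C_k f(n)$, extract a long monochromatic path via Lemma~\ref{lemma:ramseypath}, strip $n$ vertices off one end to obtain a set $S$ of size $\Omega(f(n))$, and either find a majority-colour $K_{t,\dots,t}$ inside $S$ (done), or greedily pack minority-colour copies $K^1,\dots,K^s$ of $K_{t,\dots,t}$ in $S$ and concatenate them into a minority-colour~$G_n$. The only step that needs adjustment is proving the existence of a connecting minority-coloured edge between $V(K^i)$ and $V(K^{i+1})$: if every such edge were in the majority colour, then the $2kt$ vertices of $V(K^i) \cup V(K^{i+1})$ would span a majority-coloured $K_{kt,kt}$, which contains $K_{t,\dots,t}$, contradicting the case assumption. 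For $f(n) \geq 2n\log_2 n$ the construction of Case~2 of Theorem~\ref{thm:main} works unchanged: the clique $K_t$ it produces already has $t \geq \tfrac12 \log_2 n \geq k$ for $n \geq 2^{2k}$, so $\chi(G_n) \geq k$ automatically. The main obstacle is the standard constant bookkeeping ensuring that these two ranges jointly cover $[n, R(K_n)]$ for all sufficiently large $n$.
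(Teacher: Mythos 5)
Your overall plan matches the paper's intended approach (the paper only sketches this result in one sentence: run Case~1 of Theorem~\ref{thm:main} with $K_{t,\dots,t}$ in place of $K_{t,t}$), and most of your supporting claims are fine: the lower bound $R(K_{t,\dots,t})\geq R(K_{t,t})\geq 2^{t/2}$, the vertex-deletion stability via $k$ applications of the Conlon--Fox--Sudakov lemma, Hamilton-connectedness for $k\geq 3$, and the observation that Case~2 already yields $\chi(G_n)\geq t\geq \tfrac12\log_2 n\geq k$ for large $n$.

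However, your connection step contains a genuine error. You claim that if all edges between $V(K^i)$ and $V(K^{i+1})$ are in the majority colour, then these $2kt$ vertices span a majority-coloured $K_{kt,kt}$ \emph{which contains $K_{t,\dots,t}$}. For $k\geq 3$ this is false: $K_{kt,kt}$ is bipartite and hence triangle-free, whereas $K_{t,\dots,t}$ with $k\geq 3$ parts contains triangles, so $K_{kt,kt}\not\supseteq K_{t,\dots,t}$. The only majority-coloured edges you control are the cross-edges between the two copies (the edges inside a part of a blue copy have unknown colours, and the edges between parts of a blue copy are blue), so an all-red bipartite pattern between two copies yields no red $K_{t,\dots,t}$ and no contradiction. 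The step does work for $k=2$, which is why the original proof goes through. A correct fix is to look at $k$ copies at a time: if for some $k$ of the blue copies every cross-edge between every pair of them were red, then choosing one part (of size $t$) from each copy would produce a red $K_{t,\dots,t}$, a contradiction. Hence the auxiliary graph on the copies, with adjacency given by the existence of a blue cross-edge, has independence number at most $k-1$, and by Gallai--Milgram it is covered by at most $k-1$ vertex-disjoint paths, one of which contains at least a $1/(k-1)$ fraction of the copies; this constant-factor loss is absorbed into $C=C_k$ (and into the allowance $cf(n)\le R(G_n)$ in the statement). One must also keep the connecting blue edges vertex-disjoint, which is easy since each copy meets at most two of them and, for $k\geq 3$, at least one of its parts remains untouched. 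Without some such modification your argument does not establish the existence of the connecting edges for $k\geq 3$.
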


It would be interesting to ensure other properties for the graphs in this sequence.
In particular, we believe the graphs can also be taken to have large connectivity.

\begin{conjecture}
For every $k\geq 2$ and for every  function $f:\mathbb{N}\to\mathbb{N}$ with $n\le \!f(n)\!\le R(K_n)$, there is a sequence of graphs $(G_n)_{n\in\mathbb{N}}$ with $|V(G_n)|\!=\!n$ such that 
$R(G_n) = \Theta(f(n))$ and $G_n$ is $k$-connected for all $n$ sufficiently large.
\end{conjecture}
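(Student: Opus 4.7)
The plan is to adapt the two constructions in the proof of Theorem~\ref{thm:main}, modifying the path attached to the dense part so that the whole graph becomes $k$-connected. For Case~2 (when $f(n)\ge 2n\log_2 n$), let $t$ be minimal with $R(K_t)\ge f(n)$, and define $G_n$ as the union of a clique $K_t$ with the $\lceil k/2\rceil$-th power of a cycle $C$ on $n-t+k$ vertices, where $k$ prescribed vertices of $C$ are identified with $k$ vertices of $K_t$. Since $t\ge k+1$ for large $n$ and $C$ is $k$-connected, a standard application of Menger's theorem shows that $G_n$ itself is $k$-connected. Case~1 is handled analogously, replacing the path attached to $K_{t,t}$ by an analogous $k$-connected bounded-degree gadget glued along $k$ vertices (for large $n$, the lower bound $R(K_{t,t})\ge 2^{t/2}$ forces $t\ge k+1$ as well).

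The lower bound $R(G_n)\ge R(K_t)\ge f(n)$ is immediate since $G_n$ still contains $K_t$ as a subgraph (and similarly in Case~1 with $K_{t,t}$). For the upper bound, one would run the same strategy as in the proof of Theorem~\ref{thm:main}, with two key substitutions. Lemma~\ref{lemma:ramseypath} is replaced by the Chv\'atal--R\"odl--Szemer\'edi--Trotter theorem on bounded-degree Ramsey numbers, which, applied to a $2$-colouring of $K_N$ with $N=C_k f(n)$, yields a long monochromatic copy of the $\lceil k/2\rceil$-th power of a cycle. The clique-path argument then needs to be upgraded so that consecutive blue $K_t$'s are joined by $k$ vertex-disjoint blue edges rather than by a single one; such joinings must exist whenever the monochromatic host contains no red copy of $K_{t,k}$, which in turn can be forced using a variant of Lemma~\ref{lemma:ramseyvertexdeletion} controlling $R(K_{t,k})$ in terms of $R(K_t)$.

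The main obstacle will be to combine these two monochromatic structures---a long $k$-connected bounded-degree piece and a clique $K_t$---into a single monochromatic copy of $G_n$, ensuring that the $K_t$ is attached to the cycle along a common $K_k$. In the original proof of Theorem~\ref{thm:main}, the extracted clique-path yields a $K_t$ with a path hanging off a single vertex, which is relatively straightforward; here the attachment must occur at $k$ vertices forming a clique simultaneously, so the final extraction step becomes considerably more delicate. A plausible route is to use an absorption- or regularity-based embedding at the last stage, exploiting the fact that the tail of $G_n$ has bounded maximum degree to embed it within a suitable quasi-random monochromatic host obtained from the long monochromatic cycle power.
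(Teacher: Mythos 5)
This statement is not a theorem of the paper at all: it is stated as an open conjecture in the concluding remarks (the authors explicitly write that they ``believe the graphs can also be taken to have large connectivity''), so there is no proof in the paper to compare your proposal against. What the paper does prove in this direction is only Theorem~\ref{thm:chroamtic}, which guarantees large chromatic number by replacing $K_{t,t}$ with $K_{t,\dots,t}$ in Case~1; $k$-connectivity is left open. Your write-up should therefore be judged as a proof attempt of an open problem, and as such it is a plan rather than a proof: you yourself flag the ``main obstacle'' (attaching the monochromatic clique to the long monochromatic cycle power along a prescribed $K_k$) and resolve it only with ``a plausible route is to use an absorption- or regularity-based embedding.'' That is precisely the step that would constitute the new content, and it is missing.

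Beyond the acknowledged gap, one intermediate step as stated does not work. In the clique-path argument you propose to join consecutive blue copies of $K_t$ by $k$ disjoint blue edges, arguing that the failure of such a joining forces a red $K_{t,k}$ (more precisely, by K\H{o}nig's theorem, a red complete bipartite graph $K_{t-k+1,t-k+1}$ between the two cliques). But in Case~2 the dichotomy driving the proof is ``red $K_t$ in $S$ gives a red $G_n$; otherwise all monochromatic $K_t$'s in $S$ are blue.'' A red \emph{complete bipartite} graph between two blue cliques contains no red triangle, hence no red $K_t$, and it is not a subgraph of $G_n$ in any useful way, so its appearance yields neither a red copy of $G_n$ nor a contradiction. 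The same problem recurs when you try to embed the $\lceil k/2\rceil$-th power of the cycle in blue: consecutive blocks of the embedding need complete blue bipartite connections, not just large blue matchings, and nothing in the ``no red $K_t$'' hypothesis forces these. So the linking step needs a genuinely different idea (which is presumably why the authors left this as a conjecture), and the proposal as written does not establish the statement.
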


It is also natural to ask if \Cref{theorem:colourblind} holds if we require, in addition, that the graphs $G_n$ are connected.

\begin{question}
    Is there a sequence $\{G_n\}_{n \in \mathbb N}$ of connected graphs, with $|V(G_n)| = n$, such that $R_2(G_n) = O(n)$, but $R_3(G_n) = \omega(n)$?
\end{question}

\subsection{Hypergraphs}
We finish by asking what happens with $k$-graphs in the cases not covered by \Cref{thm:main} and \Cref{thm:hyp}, i.e. $k \in \{3,4\}$.
We phrase our question in terms of gaps.

\begin{question}
    Given $k \in \{3, 4\}$ and arbitrary $C > 0$, does there exist $n \in \mathbb N$, and $n \leq a \leq R(K^{(k)}_n)$ such that no $k$-graph $H$ satisfies $a \leq R(H) \leq Ca$?
\end{question}

\subsection*{Acknowledgements}
The authors thank Letícia Mattos and Louis DeBiasio for useful discussions and Pedro Araújo for unknowingly inspiring us.
We also thank the referees for useful suggestions throughout the paper. 
\sloppy\printbibliography

\end{document}